\def\RR{{\mathbb R}}
\def\diag{{\rm diag}}
\def\Snn{{\mathcal{S}^{n \times n}}}
\def\E{{\hat{\varepsilon}}}
\newcounter{marnote}
\begin{document}
\newtheorem{thm}{Theorem}[section]
\newtheorem{Def}[thm]{Definition}
\newtheorem{lem}[thm]{Lemma}
\newtheorem{rem}[thm]{Remark}
\newtheorem{question}[thm]{Question}
\newtheorem{prop}[thm]{Proposition}
\newtheorem{cor}[thm]{Corollary}
\newtheorem{example}[thm]{Example}

\title{Strong Comparison Principles for Some
 Nonlinear Degenerate Elliptic Equations}

\author{YanYan Li \footnote{Department of Mathematics, Rutgers University, 110 Frelinghuysen Rd, Piscataway, NJ 08854, USA. Email: yyli@math.rutgers.edu.}~\quad Bo Wang \footnote{Corresponding author. School of Mathematics and Statistics, Beijing Institute of Technology, Beijing 100081, China. Email: wangbo89630@bit.edu.cn.}}
\date{Dedicated to the memory of Xiaqi Ding}

\maketitle

\begin{abstract}
In this paper, we obtain the strong comparison principle and Hopf Lemma for locally Lipschitz viscosity solutions to a class of nonlinear degenerate elliptic operators of the form $\nabla^2 \psi + L(x,\nabla \psi)$, including the conformal hessian operator.

Key words: Hopf Lemma; Strong Comparison Principle; Degenerate Ellipticity; Conformal invariance.

MSC2010: 35J60 35J70 35B51 35B65 35D40 53C21 58J70.

\end{abstract}

\setcounter{section}{0}

\section{Introduction}

In this paper, we establish the strong comparison principle and Hopf Lemma for locally Lipschitz viscosity solutions to a class of nonlinear degenerate elliptic operators.

For a positive integer $n\geq2$, let $\Omega$ be an open connected bounded subset of $\RR^{n}$, the $n$-dimensional euclidean space. For any $C^{2}$ function $u$ in $\Omega$, we consider a symmetric matrix function
\begin{equation}
F[u]:=\nabla^{2}u+L(\cdot,\nabla u),\label{Fform}
\end{equation}
where $L\in C^{0,1}_{loc}(\Omega\times\RR^{n})$, is in $\mathcal{S}^{n\times n}$, the set of all $n\times n$ real symmetric matrices.

One such matrix operator is the conformal hessian operator (see e.g. \cite{LiLi03}, \cite{V} and the references therein), that is,
\begin{equation*}
A[u]=\nabla^{2}u+\nabla u\otimes \nabla u-\frac{1}{2}|\nabla u|^{2}I,
\end{equation*}
where $I$ denotes the $n\times n$ identity matrix, and for $p$, $q\in\RR^{n}$, $p\otimes q$ denotes the $n\times n$ matrix with entries $(p\otimes q)_{ij}=p_{i}q_{j}$, $i$, $j=1$, $\cdots$, $n$. Some comparison principles for this matrix operator have been studied in \cite{Li07}-\cite{LiNir-misc}. Comparison principles for other classes of (degenerate) elliptic operators are available in the literature. See \cite{AmendolaGaliseVitolo13-DIE}-\cite{BirindelliDemengel07-CPAA}, \cite{CafLiNir11}-\cite{KoikeLey11-JMAA}, \cite{Trudinger88-RMI} and the references therein. 

Let $U$ be an open subset of $\mathcal{S}^{n\times n}$, satisfying
\begin{equation}
0\in\partial U,\quad U+\mathcal{P}\subset U,\quad tU\subset U,~\forall~t>0,\label{23}
\end{equation}
where $\mathcal{P}$ is the set of all non-negative matrices. Furthermore, in order to conclude that the strong comparison principle holds, we assume Condition $U_{\nu}$, as introduced in \cite{LiNir-misc}, for some unit vector $\nu$ in $\RR^{n}$: 
there exists $\mu=\mu(\nu)>0$ such that 
\begin{equation}
U+C_{\mu}(\nu)\subset U.\label{halfcone}
\end{equation}
Here $C_{\mu}(\nu):=\{t(\nu\otimes\nu+A):A\in\mathcal{S}^{n\times n},\|A\|<\mu,t>0\}$. Some counter examples for the strong maximum principle were given in \cite{LiNir-misc} to show that the condition (\ref{halfcone}) cannot be simply dropped.

\begin{rem}
If $U$ satisfies (\ref{23}),
\begin{equation*}
\diag\{1,0,\cdots,0\}\in U,
\end{equation*}
and 
\begin{equation*}
O^{t}UO\subset U,\quad\forall~O\in O(n),
\end{equation*}
where $O(n)$ denotes the set of $n\times n$ orthogonal matrices, then it is easy to see that $U$ satisfies (\ref{halfcone}).

\end{rem}

Let $u$, $v\in C^{0,1}_{loc}(\Omega)$. We say that 

\begin{equation}
F[u]\in\Snn\setminus U\quad \left(F[v] \in \bar{U}\right),\quad\mbox{in }\Omega \label{eq0}
\end{equation}
in the viscosity sense, if for any $x_{0}\in\Omega$, $\varphi\in C^{2}(\Omega)$, $(\varphi-u)(x_{0})=0$ ($(\varphi-v)(x_{0})=0$) and 
\begin{equation*}
u-\varphi\geq0\quad(v-\varphi\leq0),\quad\mbox{near }x_{0},
\end{equation*}
there holds
\begin{equation*}
F[\varphi](x_{0})\in\Snn\setminus U \quad \left(F[\varphi](x_{0})\in \bar{U}\right).
\end{equation*}

We have the following strong comparison principle and Hopf Lemma.

\begin{thm}(Strong Comparison Principle)
Let $\Omega$ be an open connected subset of $\RR^{n}$, $n\geq2$, $U$ be an open subset of $\mathcal{S}^{n\times n}$, satisfying (\ref{23}) and Condition $U_{\nu}$ for every unit vector $\nu$ in $\RR^{n}$, and $F$ be of the form (\ref{Fform}) with $L\in C_{loc}^{0,1}(\Omega\times\RR^{n})$. Assume that $u$, $v\in C^{0,1}_{loc}(\Omega)$ satisfy (\ref{eq0}) in the viscosity sense, $u\geq v$ in $\Omega$. Then either $u>v$ in $\Omega$ or $u\equiv v$ in $\Omega$.
\label{strong compare}
\end{thm}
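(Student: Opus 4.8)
The plan is to argue by contradiction using a connectedness argument: let $\Omega_0 := \{x \in \Omega : u(x) = v(x)\}$. This set is relatively closed in $\Omega$ by continuity of $u$ and $v$. If I can show that $\Omega_0$ is also open, then since $\Omega$ is connected, either $\Omega_0 = \emptyset$ (giving $u > v$) or $\Omega_0 = \Omega$ (giving $u \equiv v$), which is exactly the dichotomy claimed. So the whole content is the \emph{local} statement: if $u = v$ at some point $x_0 \in \Omega$, then $u \equiv v$ in a neighborhood of $x_0$. Equivalently, I want a strong maximum principle / Hopf-type statement for the difference $w := u - v \geq 0$, saying that $w$ cannot vanish at an interior point without vanishing identically nearby.

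First I would reduce the two-solution problem to a one-function problem. Since $F[u] \in \Snn \setminus U$ and $F[v] \in \bar U$ in the viscosity sense, and $F$ has the structure $\nabla^2 \psi + L(\cdot, \nabla \psi)$ with $L$ locally Lipschitz, I expect to linearize the $L$ term: writing $L(x, \nabla u) - L(x, \nabla v) = \langle b(x), \nabla u - \nabla v\rangle$ for a bounded measurable (locally $L^\infty$) vector-valued coefficient $b$ coming from the Lipschitz bound on $L$ in the gradient slot, one obtains that $w = u - v$ is, in an appropriate viscosity sense, a nonnegative supersolution of a \emph{linear} degenerate elliptic inequality of the form $\nabla^2 w + b(x)\cdot\nabla w \ \text{(something)} \ 0$, where the precise meaning is inherited from the cone condition on $U$. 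The key algebraic input is (\ref{23}) together with Condition $U_\nu$: the inclusion $U + \mathcal P \subset U$ makes the operator degenerate elliptic, and $U + C_\mu(\nu) \subset U$ provides, in the direction $\nu$, a genuine (uniformly) elliptic "slab" of strength $\mu$ that can be exploited to run a barrier argument.

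The heart of the proof is a Hopf Lemma: suppose $w \geq 0$ vanishes at $x_0$ but is positive somewhere nearby; place an interior ball $B$ tangent to $\{w = 0\}$ from inside the positivity region, with $x_0 \in \partial B$, and build a classical barrier $\varphi$ of the usual Hopf shape $\varphi(x) = \varepsilon(e^{-\alpha |x - x_c|^2} - e^{-\alpha R^2})$ on an annulus around $B$. The point is to choose the vector $\nu$ in Condition $U_\nu$ to be the \emph{radial} direction at $x_0$ (this is why one needs the condition for \emph{every} unit vector $\nu$, since the geometry of the touching ball is not known in advance), so that $\nabla^2 \varphi$ has a large positive eigenvalue $\sim \alpha$ in the $\nu$-direction and controlled eigenvalues elsewhere; then for $\alpha$ large, $F[\varphi] = \nabla^2\varphi + L(\cdot,\nabla\varphi)$ lands inside $C_\mu(\nu)$ up to the bounded $L$-perturbation, hence $F[w + \text{test}] $-type combinations land in $U$, contradicting $F[u] \notin U$ after testing $u$ from below (respectively $v$ from above) by $\varphi$ plus the other function. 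Carrying this out requires the standard trick of testing the function $u$ against $v + \varphi$ using the definition of viscosity solution for both $u$ and $v$ simultaneously; the doubling/sup-convolution machinery or the fact that $v$ is Lipschitz lets one transfer a smooth test function for $w$ into admissible test functions for $u$ and $v$ individually.

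The main obstacle I anticipate is precisely this last transfer step: $u$ and $v$ are only locally Lipschitz and only \emph{separately} satisfy viscosity inequalities, so there is no a priori regularity for $w = u-v$ and one cannot directly plug a smooth barrier into an equation for $w$. The way around it is to work with the two viscosity inequalities at once — e.g. via inf- and sup-convolutions of $u$ and $v$ to gain semiconvexity/semiconcavity, apply the Jensen–Ishii lemma to get a pair of matrices $X \geq Y$ with $F[\cdot]$ controlled from the two equations, and then use the cone condition $U + C_\mu(\nu) \subset U$ to derive the contradiction $X - Y \in C_\mu(\nu)$ impossible against $X \geq Y$ once the barrier forces a strict gap. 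A secondary technical point is handling the bounded-but-merely-measurable drift $b(x)$ in the viscosity framework, which is routine since it enters as a lower-order term and the barrier is classical; and one must be careful that all constructions stay in a small enough neighborhood that the local Lipschitz bounds on $L$ and on $\nabla u, \nabla v$ are uniform.
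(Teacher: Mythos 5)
Your proposal follows essentially the same route as the paper: argue by contradiction with an interior ball touching the coincidence set, use the exponential barrier $e^{-\alpha|x-x_{0}|^{2}}-e^{-\alpha R^{2}}$ with $\nu$ the radial direction at the touching point, control the matrix-valued $L$-term by its local Lipschitz bound, regularize $u$ and $v$ by inf- and sup-convolutions, locate almost-touching points via a Jensen-type lemma where the convolutions are punctually twice differentiable with $\nabla^{2}(u_{\epsilon}-v^{\epsilon}-\E h)\geq 0$ and small gradient, and conclude from Condition $U_{\nu}$ together with $U+\mathcal{P}\subset U$ that membership in $U$ contradicts $F[u_{\epsilon}]$ lying in $\Snn\setminus U$. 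The only cosmetic difference is that you invoke the Jensen--Ishii theorem on sums, while the paper implements the same step through the convex-envelope/ABP measure argument of Caffarelli--Cabr\'e applied to $u_{\epsilon}-v^{\epsilon}-\E h-\tau$.
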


\begin{thm}(Hopf Lemma)
Let $\Omega$ be an open connected subset of $\RR^{n}$, $n\geq2$, $\partial\Omega$ be $C^{2}$ near a point $\hat{x}\in\partial\Omega$, and $U$ be an open subset of $\mathcal{S}^{n\times n}$, satisfying (\ref{23}) and Condition $U_{\nu}$ for $\nu=\nu(\hat{x})$, the interior unit normal of $\partial\Omega$ at $\hat{x}$, and $F$ be of the form (\ref{Fform}) with $L\in C_{loc}^{0,1}(\Omega\times\RR^{n})$. Assume that $u$, $v\in C^{0,1}_{loc}(\Omega\cup\{\hat{x}\})$ satisfy (\ref{eq0}) in the viscosity sense, $u>v$ in $\Omega$ and $u(\hat{x})=v(\hat{x})$. Then we have 
\begin{equation*}
\liminf\limits_{s\rightarrow0^{+}}\frac{(u-v)(\hat{x}+s\nu(\hat{x}))}{s}>0.
\end{equation*}
\label{hopf}
\end{thm}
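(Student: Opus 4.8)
The plan is to reduce the Hopf Lemma to a localized barrier argument, using the Strong Comparison Principle (Theorem \ref{strong compare}) as a black box to rule out degenerate behavior near $\hat x$. First I would straighten the boundary: since $\partial\Omega$ is $C^2$ near $\hat x$, after a rigid motion I may assume $\hat x=0$, $\nu(\hat x)=e_n$, and that near $0$ the domain contains an interior ball $B_R(z_0)$ with $z_0=Re_n$ and $0\in\partial B_R(z_0)$. Set $w:=u-v\geq 0$ on $\overline{B_R(z_0)}$ (shrinking $R$ so the ball lies in $\Omega\cup\{\hat x\}$), with $w>0$ in the interior and $w(0)=0$. The goal is a barrier $h$ with $h(0)=0$, $\partial_\nu h(0)>0$, $h\le w$ on the relevant set, which gives $\liminf_{s\to0^+} w(s\nu)/s\ge \partial_\nu h(0)>0$.

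The key step is constructing the barrier and verifying it is a viscosity subsolution in the correct sense, i.e. $F[h]\in\bar U$ while simultaneously $F[\varphi]\in\Snn\setminus U$ for test functions from above at $u$. The natural candidate is the classical Hopf barrier $h(x)=\varepsilon\bigl(e^{-\alpha|x-z_0|^2}-e^{-\alpha R^2}\bigr)$ on the annular region $A:=B_R(z_0)\setminus \overline{B_{R/2}(z_0)}$, for large $\alpha$ and small $\varepsilon$. One computes $\nabla^2 h = \varepsilon e^{-\alpha|x-z_0|^2}\bigl(4\alpha^2 (x-z_0)\otimes(x-z_0)-2\alpha I\bigr)$; on the outer part of the annulus, for $\alpha$ large this matrix has one large positive eigenvalue in the radial direction $\nu$ (which at the boundary point $0$ equals $e_n$) of size $\sim \alpha^2$, and the other eigenvalues $\sim -\alpha$ but controlled. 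Adding $L(x,\nabla h)$, which stays bounded on $A$ because $|\nabla h|$ is bounded there and $L\in C^{0,1}_{loc}$, the matrix $F[h]$ lies in $z_0$-direction cone $C_\mu(\nu)$ up to a bounded perturbation; more precisely, $F[h] = t(\nu\otimes\nu + A)$ with $t\sim\alpha^2$ large and $\|A\|<\mu$ once $\alpha$ is large enough. Then Condition $U_\nu$, equation (\ref{halfcone}), together with $0\in\bar U$ (from (\ref{23})) forces $F[h]\in U\subset\bar U$, so $h$ is a viscosity subsolution of $F[h]\in\bar U$ on $A$. Matching constants: choose $\varepsilon$ small so that $h\le w$ on the inner sphere $\partial B_{R/2}(z_0)$ (possible since $w$ has a positive minimum there by the Strong Comparison Principle applied in $\Omega$, which rules out $w\equiv0$ — here is where Theorem \ref{strong compare} enters), while $h=0\le w$ on the outer sphere $\partial B_R(z_0)$ and $h<0\le w$ elsewhere on $\partial A$; this holds automatically by the sign of the exponential.

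Next I would invoke the comparison principle on the annulus $A$: with $h$ a viscosity subsolution of $F[h]\in\bar U$, $u$ and $v$ as given, and $h\le u-v$ on $\partial A$, I want $v+h\le u$ in $A$. This is the \emph{comparison} (weak maximum principle) step for the pair: from $F[u]\in\Snn\setminus U$, $F[v+h]\in\bar U$ (the sum rule: $F[v+h]$ compares to $F[v]+\nabla^2 h + (L(\cdot,\nabla v+\nabla h)-L(\cdot,\nabla v))$, and the ellipticity plus $U+\mathcal P\subset U$ let the large positive part of $\nabla^2 h$ absorb the error, so $v+h$ is still a subsolution in the $\bar U$ sense up to shrinking $A$), one concludes $u\ge v+h$ by a standard doubling-of-variables viscosity argument — this is essentially the weak comparison principle underlying the theory, and I expect the paper to have set it up earlier or to reproduce it here. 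Granting it, on the segment $\{s\nu: 0<s<s_0\}\subset A$ we get $(u-v)(s\nu)\ge h(s\nu)$, hence $\liminf_{s\to0^+}(u-v)(s\nu)/s\ge h'(0)\cdot\nu = 2\alpha\varepsilon R e^{-\alpha R^2}>0$, proving the Hopf Lemma.

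The main obstacle is the barrier verification in the viscosity framework: one must check that $F[h]\in\bar U$ genuinely holds at \emph{every} point of $A$ (not just generically), controlling the sign and size of \emph{all} eigenvalues of $\nabla^2 h + L(\cdot,\nabla h)$ uniformly, and that the ``$v+h$ is a subsolution'' sum-rule survives the $C^{0,1}$ (not $C^1$) regularity of $L$ and the merely Lipschitz regularity of $v$ — this is delicate because $L$'s Lipschitz dependence on the gradient variable produces an error term of the same order as $\nabla h$ that must be dominated by the $\alpha^2$-sized radial eigenvalue, forcing the annulus to be thin and $\alpha$ large in a quantitatively coordinated way. A secondary subtlety is ensuring the interior ball $B_R(z_0)$ genuinely sits inside $\Omega\cup\{\hat x\}$ with only $\hat x$ on the boundary, which uses the $C^2$ regularity of $\partial\Omega$ near $\hat x$.
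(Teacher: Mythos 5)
Your outline has the right geometric shape (interior ball, exponential barrier, reduce to a comparison statement), but it contains a genuine gap precisely where the paper's actual work lies: the step ``invoke the comparison principle on the annulus \ldots this is essentially the weak comparison principle underlying the theory, and I expect the paper to have set it up earlier''. No such weak comparison principle is available in this paper, and it is not a standard fact for operators defined only by an open set $U$ with $0\in\partial U$: the subsolution condition $F[v+\E h]\in\bar U$ and the supersolution condition $F[u]\in\Snn\setminus U$ overlap on $\partial U$, there is no zeroth-order strict monotonicity to run the usual doubling-of-variables argument, and the solutions are merely Lipschitz. Establishing exactly this comparison is the entire technical content of the proof: the paper argues by contradiction on $\min(u-v-\E h)$, passes to the sup- and inf-convolutions $u_\epsilon$, $v^\epsilon$, applies Jensen's lemma/ABP (lemma 3.5 of \cite{CabreCaffBook}) to find points of punctual second-order differentiability where $\nabla^2(u_\epsilon-v^\epsilon-\E h)\geq0$ and the gradient of the difference is small, and then uses Condition $U_\nu$ to absorb all the resulting error terms ($a_1\epsilon$, $a_2\epsilon$, $C\eta$, $C\delta$, $C/\alpha$) and land strictly inside $U$, contradicting $F[u_\epsilon]-a_1\epsilon I\in\Snn\setminus U$. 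By ``granting'' the comparison step you have skipped the proof rather than given one.

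There is a second, more structural problem with your barrier verification. You work on the full annulus $B_R(z_0)\setminus\overline{B_{R/2}(z_0)}$ and claim $F[\E h]$ lies in the cone $C_\mu(\nu)$ with $\nu=\nu(\hat x)$; but the rank-one direction of $\nabla^2 h$ at a point $x$ is the \emph{local} radial direction $(x-z_0)/|x-z_0|$, which ranges over all unit vectors on the annulus, while Theorem \ref{hopf} assumes Condition $U_\nu$ only for the single direction $\nu(\hat x)$. Your fallback, that ``$U+\mathcal P\subset U$ lets the large positive part of $\nabla^2 h$ absorb the error'', does not work either, since $\nabla^2 h$ is not positive semidefinite (its tangential eigenvalues are $\sim-\alpha$), which is exactly why the cone condition (\ref{halfcone}) is needed at all. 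The paper avoids this by localizing to the small lens $A_\delta=B(\hat x,\delta)\cap B(x_0,R)$, where the radial direction differs from $\nu(\hat x)=(\hat x-x_0)/R$ by $O(\delta)$, and this $C\delta I$ error is then absorbed by choosing $\delta$ small against $\mu(\nu(\hat x))$ in estimate (\ref{980})--(\ref{dd'}). A minor further remark: your appeal to Theorem \ref{strong compare} to get a positive minimum of $u-v$ on the inner sphere is unnecessary, since the Hopf Lemma already hypothesizes $u>v$ in $\Omega$.
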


\begin{rem}
If $u$ and $v\in C^{2}$, then Theorems \ref{strong compare} and \ref{hopf} were proved in \cite{LiNir-misc}. \end{rem}

\section{Proof of Theorem \ref{strong compare}}

\begin{proof}[Proof of Theorem \ref{strong compare}]

We argue by contradiction. Suppose the conclusion is false. Since $u-v\in C^{0,1}_{loc}(\Omega)$ is non-negative, the set $\{x\in\Omega:u=v\}$ is closed. Then there exists an open ball $B(x_{0},R)\subset\subset\Omega$ centered at $x_{0}\in\Omega$ with radius $R>0$ such that 
\begin{equation*}
\begin{cases}
u-v>0,\quad\mbox{in }\overline{B(x_{0},R)}\texttt{\symbol{'134}}\{\hat{x}\},\\
u(\hat{x})-v(\hat{x})=0,\quad\hat{x}\in\partial B(x_{0},R).
\end{cases}
\end{equation*}

We make use of the standard comparison function
\begin{equation}
h(x):=e^{-\alpha|x-x_{0}|^{2}}-e^{-\alpha R^{2}},\quad\forall~\alpha>0,x\in\Omega.\label{comparef}
\end{equation}
For $i$, $j=1$, $\cdots$, $n$, we have 
\begin{equation}
h_{i}(x)=\frac{\partial }{\partial x_{i}}h(x)=-2\alpha(x_{i}-(x_{0})_{i})e^{-\alpha|x-x_{0}|^{2}},\label{hi}
\end{equation}
and 
\begin{equation}
h_{ij}(x)=\frac{\partial^{2}}{\partial x_{i}\partial x_{j}}h(x)=4\alpha^{2}e^{-\alpha|x-x_{0}|^{2}}\left[(x_{i}-(x_{0})_{i})(x_{j}-(x_{0})_{j})-\frac{1}{2\alpha}\delta_{ij}\right].
\label{hij}\end{equation}

Choose $0<R'<\frac{R}{2}$ such that $B(\hat{x},R')\subset\subset\Omega$. For any $\delta\in(0,R')$, we have that for any $x\in\overline{B(\hat{x},\delta)}$,
\begin{equation}
-1\leq h(x)\leq1,\quad|\nabla h(x)|+|\nabla^{2}h(x)|\leq C\label{319-}
\end{equation}
for some $C>0$ independent of $\delta$ and $\alpha$. 

It follows that, for any $0<\E<\min\limits_{(\overline{B(\hat{x},\delta)}\texttt{\symbol{'134}}B(\hat{x},\frac{1}{2}\delta))\cap\overline{B(x_{0},R)}}(u-v)$, 
\begin{equation}
u-v-\E h>0,\quad\mbox{on }\overline{B(\hat{x},\delta)}\texttt{\symbol{'134}}B(\hat{x},\frac{1}{2}\delta),\quad (u-v-\E h)(\hat{x})=0.\label{319}
\end{equation}
Indeed, by (\ref{319-}) and the fact that $h<0$ outside $\overline{B(x_{0},R)}$, for any $x\in(\overline{B(\hat{x},\delta)}\texttt{\symbol{'134}}B(\hat{x},\frac{1}{2}\delta))\texttt{\symbol{'134}}\overline{B(x_{0},R)}$,  
$$(u-v)(x)\geq0>\E h(x);$$
 and for any $x\in(\overline{B(\hat{x},\delta)}\texttt{\symbol{'134}}B(\hat{x},\frac{1}{2}\delta))\cap\overline{B(x_{0},R)}$, 
 $$(u-v)(x)\geq\min\limits_{(\overline{B(\hat{x},\delta)}\texttt{\symbol{'134}}B(\hat{x},\frac{1}{2}\delta))\cap\overline{B(x_{0},R)}}(u-v)>\E\geq\E h(x).$$

For any $\epsilon>0$, we define the $\epsilon$-lower and upper envelope of $u$ and $v$ as 
\begin{equation*}
u_{\epsilon}(x):=\min\limits_{y\in\overline{B(x_{0},R)}\cup\overline{B(\hat{x},R')}}\{u(y)+\frac{1}{\epsilon}|x-y|^{2}\},\quad\forall x\in\overline{B(x_{0},R)}\cup\overline{B(\hat{x},R')},
\end{equation*}
and 
\begin{equation*}
v^{\epsilon}(x):=\max\limits_{y\in\overline{B(x_{0},R)}\cup\overline{B(\hat{x},R')}}\{v(y)-\frac{1}{\epsilon}|x-y|^{2}\},\quad\forall x\in\overline{B(x_{0},R)}\cup\overline{B(\hat{x},R')},
\end{equation*}
respectively.

Then we conclude that there exists $\epsilon_{0}=\epsilon_{0}(\delta,\alpha,\E)$ such that for $0<\epsilon<\epsilon_{0}$,
\begin{equation}
\min\limits_{\overline{B(\hat{x},\delta)}}(u_{\epsilon}-v^{\epsilon}-\E h)\leq0,\quad u_{\epsilon}-v^{\epsilon}-\E h>0~\mbox{on }\overline{B(\hat{x},\delta)}\texttt{\symbol{'134}}B(\hat{x},\frac{1}{2}\delta).\label{319'}
\end{equation}
Indeed, the first part of (\ref{319'}) follows from the definitions of $u_{\epsilon}$ and $v^{\epsilon}$, and the fact that $h(\hat{x})=0$; $(u_{\epsilon}-v^{\epsilon}-\E h)(\hat{x})\leq (u-v)(\hat{x})=0$. Now we prove the second part of (\ref{319'}). By theorem 5.1 (a) in \cite{CabreCaffBook}, we have that 
\begin{equation*}
u_{\epsilon}-v^{\epsilon}\uparrow u-v\quad\mbox{uniformly on }B(x_{0},R)\cup B(\hat{x},R'),\quad\mbox{as }\epsilon\rightarrow0.
\end{equation*} 
It follows that for any $M>0$, there exists $\epsilon_{0}(M)>0$ such that
\begin{equation*}
(u_{\epsilon}-v^{\epsilon}-\E h)(x)>\min\limits_{\overline{B(\hat{x},\delta)}\texttt{\symbol{'134}}B(\hat{x},\frac{1}{2}\delta)}(u-v-\E h)-M
\end{equation*}
 for any $0<\epsilon<\epsilon_{0}$ and any $x\in\overline{B(\hat{x},\delta)}\texttt{\symbol{'134}}B(\hat{x},\frac{1}{2}\delta)$. Then by taking $0<M<\frac{1}{2}\min\limits_{\overline{B(\hat{x},\delta)}\texttt{\symbol{'134}}B(\hat{x},\frac{1}{2}\delta)}(u-v-\E h)$, (\ref{319'}) is obtained.

It follows from (\ref{319'}) that there exists $\bar{\eta}=\bar{\eta}(\delta,\alpha,\E)>0$ such that for any $\eta\in(0,\bar{\eta})$, there exists $\tau=\tau(\epsilon,\eta,\delta,\alpha,\E)\in\mathbb{R}^{1}$ such that 
\begin{equation}
\min\limits_{\overline{B(\hat{x},\delta)}}(u_{\epsilon}-v^{\epsilon}-\E h-\tau)=-\eta,\quad u_{\epsilon}-v^{\epsilon}-\E h-\tau>0~\mbox{on }\overline{B(\hat{x},\delta)}\texttt{\symbol{'134}}B(\hat{x},\frac{1}{2}\delta).
\end{equation}

Let $$\xi_{\epsilon}:=u_{\epsilon}-v^{\epsilon}-\E h-\tau,$$ and $\Gamma_{\xi_{\epsilon}^{-}}$ denote the convex envelope of $\xi_{\epsilon}^{-}:=-\min\{\xi_{\epsilon},0\}$ on $\overline{B(\hat{x},\delta)}$. Then by (20) in \cite{LiNgWang-arxiv} and (\ref{319-}), we have 
\begin{equation*}
\nabla^{2}\xi_{\epsilon}\leq\frac{4}{\epsilon}I+C\E I\quad\mbox{ a.e. in }B(\hat{x},\frac{1}{2}\delta).
\end{equation*}
And by lemma 3.5 in \cite{CabreCaffBook}, we have 
\begin{equation*}
\int_{\{\xi_{\epsilon}=\Gamma_{\xi_{\epsilon}^{-}}\}}\mbox{det}(\nabla^{2}\Gamma_{\xi_{\epsilon}^{-}})>0,
\end{equation*}
which implies that the Lebesgue measure of $\{\xi_{\epsilon}=\Gamma_{\xi_{\epsilon}^{-}}\}$ is positive. Then there exists $x_{\epsilon,\eta}\in\{\xi_{\epsilon}=\Gamma_{\xi_{\epsilon}^{-}}\}\cap B(\hat{x},\frac{1}{2}\delta)$ such that both of $v^{\epsilon}$ and $u_{\epsilon}$ are punctually second order differentiable at $x_{\epsilon,\eta}$, 
\begin{equation}
0>\xi_{\epsilon}(x_{\epsilon,\eta})\geq-\eta,\label{shitb}
\end{equation}
\begin{equation}
|\nabla\xi_{\epsilon}(x_{\epsilon,\eta})|\leq C\eta,\label{shitc}
\end{equation}
and
\begin{equation}
\nabla^{2}\xi_{\epsilon}(x_{\epsilon,\eta})=\nabla^{2}(u_{\epsilon}-v^{\epsilon}-\E h)(x_{\epsilon,\eta})\geq 0.\label{shit2}
\end{equation}

For $x_{\epsilon,\eta}\in\Omega$, by the definitions of $u_{\epsilon}$ and $v^{\epsilon}$, there exist $(x_{\epsilon,\eta})_{*}$ and $(x_{\epsilon,\eta})^{*}\in\Omega$ such that 
\begin{equation*}
u_{\epsilon}(x_{\epsilon,\eta})=u((x_{\epsilon,\eta})_{*})+\frac{1}{\epsilon}|(x_{\epsilon,\eta})_{*}-x_{\epsilon,\eta}|^{2},
\end{equation*}
and 
\begin{equation*}
v^{\epsilon}(x_{\epsilon,\eta})=v((x_{\epsilon,\eta})^{*})-\frac{1}{\epsilon}|(x_{\epsilon,\eta})^{*}-x_{\epsilon,\eta}|^{2}.
\end{equation*}

Since $u$ and $v\in C^{0,1}_{loc}(\Omega)$, by (2.6) and (2.7) in \cite{Li09-CPAM}, we have 
\begin{equation}
|(x_{\epsilon,\eta})_{*}-x_{\epsilon,\eta}|+|(x_{\epsilon,\eta})^{*}-x_{\epsilon,\eta}|\leq C_{1}\epsilon,\label{eq10}
\end{equation}
and 
\begin{equation}
|\nabla u_{\epsilon}(x_{\epsilon,\eta})|+|\nabla v^{\epsilon}(x_{\epsilon,\eta})|\leq C_{2},\label{eq11}
\end{equation}
where $C_{1}$ and $C_{2}$ are two universal positive constant independent of $\epsilon$ and $\eta$. 

Since $u_{\epsilon}$ is punctually second order differentiable at $x_{\epsilon,\eta}$, we have 
\begin{equation}
u_{\epsilon}(x_{\epsilon,\eta}+z)\geq u_{\epsilon}(x_{\epsilon,\eta})+\nabla u_{\epsilon}(x_{\epsilon,\eta})\cdot z+\frac{1}{2}z^{T}\nabla^{2}u_{\epsilon}(x_{\epsilon,\eta})z+o(|z|^{2}),\quad\mbox{ as }z\rightarrow 0.\label{yre2}
\end{equation}
By the definition of $u_{\epsilon}$, we have
\begin{equation*}
u_{\epsilon}(x_{\epsilon,\eta}+z)\leq u((x_{\epsilon,\eta})_{*}+z) +\frac{1}{\epsilon}|(x_{\epsilon,\eta})_{*}-x_{\epsilon,\eta}|^{2},\label{yree}
\end{equation*}
and therefore, in view of (\ref{yre2}),
\begin{align*}
u((x_{\epsilon,\eta})_{*}+z)&\geq u_{\epsilon}(x_{\epsilon,\eta}+z) - \frac{1}{\epsilon}|(x_{\epsilon,\eta})_{*}-x_{\epsilon,\eta}|^{2}\\
&\geq P_{\epsilon}((x_{\epsilon,\eta})_{*}+ z)+o(|z|^{2}),\quad\mbox{ as }z\rightarrow 0,
\end{align*}
where $P_{\epsilon}$ is a quadratic polynomial with 
\begin{align*}
P_{\epsilon}((x_{\epsilon,\eta})_{*})&=u_{\epsilon}(x_{\epsilon,\eta}) -\frac{1}{\epsilon}|(x_{\epsilon,\eta})_{*}-x_{\epsilon,\eta}|^{2} = u(x_{\epsilon,\eta})_{*}), \nonumber \\
\nabla P_{\epsilon}((x_{\epsilon,\eta})_{*})&=\nabla u_{\epsilon}(x_{\epsilon,\eta}),\\
\nabla^{2} P_{\epsilon}((x_{\epsilon,\eta})_{*})&=\nabla^{2} u_{\epsilon}(x_{\epsilon,\eta}).
\end{align*}
Since $u$ satisfies (\ref{eq0}) in the viscosity sense, we thus have 
\begin{equation}
\nabla^2 u_\epsilon(x_{\epsilon,\eta}) + L((x_{\epsilon,\eta})_{*}, \nabla u_{\epsilon}(x_{\epsilon,\eta})) = F[P_{\epsilon}]((x_{\epsilon,\eta})_{*})\in \Snn\setminus U.\label{fanyang}
\end{equation}
On the other hand, in view of (\ref{eq10}), (\ref{eq11}) and the fact that $L\in C_{loc}^{0,1}(\Omega\times\RR^{n})$, 
\begin{equation}
L(x_{\epsilon,\eta}, \nabla u_{\epsilon}(x_{\epsilon,\eta})) - L((x_{\epsilon,\eta})_{*}, \nabla u_{\epsilon}(x_{\epsilon,\eta})) \leq C|x_{\epsilon,\eta} - (x_{\epsilon,\eta})_{*}|I\leq a_{1}\epsilon I,\label{fanyang--'}
\end{equation}
where $C$ and $a_{1}>0$ are universal constants.

It follows from (\ref{23}), (\ref{fanyang}) and (\ref{fanyang--'}) that 
\begin{equation}
F[u_{\epsilon}](x_{\epsilon,\eta})-a_{1}\epsilon I\in \mathcal{S}^{n\times n}\texttt{\symbol{'134}}U.\label{dd-}
\end{equation}
Analogusly, we can obtain 
\begin{equation*}
F[v^{\epsilon}](x_{\epsilon,\eta})+a_{2}\epsilon I\in \bar{U}
\end{equation*}
for some universal constants $a_{2}>0$.

By (\ref{shitc}), (\ref{shit2}), (\ref{eq11}) and the fact that $L\in C_{loc}^{0,1}(\Omega\times\RR^{n})$,
\begin{align}
F[u_{\epsilon}](x_{\epsilon,\eta})&\geq \nabla^{2}(v^{\epsilon}+\E h)(x_{\epsilon,\eta})+L(x_{\epsilon,\eta},\nabla u_{\epsilon}(x_{\epsilon,\eta}))\nonumber\\
&=F[v^{\epsilon}+\E h](x_{\epsilon,\eta})+L(x_{\epsilon,\eta},\nabla u_{\epsilon}(x_{\epsilon,\eta}))-L(x_{\epsilon,\eta},\nabla v^{\epsilon}(x_{\epsilon,\eta}))\nonumber\\
&\geq F[v^{\epsilon}+\E h](x_{\epsilon,\eta})-C|\nabla(u_{\epsilon}-v^{\epsilon})(x_{\epsilon,\eta})|\nonumber\\
&\geq F[v^{\epsilon}+\E h](x_{\epsilon,\eta})-C(\eta+\hat{\varepsilon}|\nabla h(x_{\epsilon,\eta})|)I.\label{a1}
\end{align}

By (\ref{319-}), (\ref{eq11}) and the fact that $L\in C_{loc}^{0,1}(\Omega\times\RR^{n})$, we have 
\begin{align}
&\quad F[v^{\epsilon}+\E h](x_{\epsilon,\eta})\nonumber\\
&=F[v^{\epsilon}](x_{\epsilon,\eta})+\E\nabla^{2}h(x_{\epsilon,\eta})+L(x_{\epsilon,\eta},\nabla(v^{\epsilon}+\E h)(x_{\epsilon,\eta}))-L(x_{\epsilon,\eta},\nabla v^{\epsilon}(x_{\epsilon,\eta}))\nonumber\\
&\geq F[v^{\epsilon}](x_{\epsilon,\eta})+\E[\nabla^{2}h(x_{\epsilon,\eta})-C |\nabla h(x_{\epsilon,\eta})|I].\label{9801}
\end{align}

Then by (\ref{hi}), (\ref{hij}) and the fact $|x_{\epsilon,\eta}-x_{0}|<2R$,
\begin{align}
&\quad\nabla^{2}h(x_{\epsilon,\eta})-C |\nabla h(x_{\epsilon,\eta})|I\nonumber\\
&=4\alpha^{2}e^{-\alpha|x_{\epsilon,\eta}-x_{0}|^{2}}\left[(x_{\epsilon,\eta}-x_{0})\otimes(x_{\epsilon,\eta}-x_{0})-\frac{1}{2\alpha}I-\frac{C}{4\alpha}|x_{\epsilon,\eta}-x_{0}|I\right]\nonumber\\
&\geq 4\alpha^{2}e^{-\alpha|x_{\epsilon,\eta}-x_{0}|^{2}}\left[(x_{\epsilon,\eta}-x_{0})\otimes(x_{\epsilon,\eta}-x_{0})-\frac{C}{\alpha}I\right]\nonumber\\
&\geq 4\alpha^{2}e^{-\alpha|x_{\epsilon,\eta}-x_{0}|^{2}}[(\hat{x}-x_{0})\otimes(\hat{x}-x_{0})-C\delta RI-\frac{C}{\alpha}I]\nonumber\\
&=4R^{2}\alpha^{2}e^{-\alpha|x_{\epsilon,\eta}-x_{0}|^{2}}\left[\left(\frac{\hat{x}-x_{0}}{R})\otimes(\frac{\hat{x}-x_{0}}{R}\right)-C\delta I-\frac{C}{\alpha}I\right]\nonumber\\
&\geq 4R^{2}\alpha^{2}e^{-4R^{2}\alpha}\left[\left(\frac{\hat{x}-x_{0}}{R})\otimes(\frac{\hat{x}-x_{0}}{R}\right)-C\delta I-\frac{C}{\alpha}I\right].\label{980}
\end{align}

Inserting (\ref{980}) into (\ref{9801}), we have
\begin{equation}
F[v^{\epsilon}+\varepsilon h](x_{\epsilon,\eta})\geq F[v^{\epsilon}](x_{\epsilon,\eta})+4R^{2}\E\alpha^{2}e^{-4R^{2}\alpha}\left[\left(\frac{\hat{x}-x_{0}}{R})\otimes(\frac{\hat{x}-x_{0}}{R}\right)-C\delta I-\frac{C}{\alpha}I\right].\label{a2}
\end{equation}

It follows from (\ref{a1}) and (\ref{a2}) that 
\begin{align}
&\quad F[u_{\epsilon}](x_{\epsilon,\eta})-a_{1}\epsilon I\nonumber\\
&\geq F[v^{\epsilon}](x_{\epsilon,\eta})+a_{2}\epsilon I\nonumber\\
&\quad\quad\quad\quad+4R^{2}\E\alpha^{2}e^{-4R^{2}\alpha}\left[\left(\frac{\hat{x}-x_{0}}{R})\otimes(\frac{\hat{x}-x_{0}}{R}\right)-C\delta I-\frac{C}{\alpha}I-C\frac{e^{4R^{2}\alpha}}{\E\alpha^{2}}(\epsilon+\eta)I\right].\label{dd'}
\end{align}

We can firstly fix the value of small $\delta>0$ and a large $\alpha>1$, then fix the value of small $\E>0$, and lastly fix the value of small $\epsilon$ and $\eta>0$ such that 
\begin{equation*}
\|C\delta I+\frac{C}{\alpha}I+C\frac{e^{4R^{2}\alpha}}{\E\alpha^{2}}(\epsilon+\eta)I\|<\frac{1}{2}\mu(\frac{\hat{x}-x_{0}}{R}),
\end{equation*}
where $\mu$ is obtained from condition (\ref{halfcone}). 

Therefore, by (\ref{halfcone}) and (\ref{dd'}), we have that 

\begin{equation*}
F[u_{\epsilon}](x_{\epsilon,\eta})-a_{1}\epsilon I\in U,
\end{equation*}
which is a contradiction with (\ref{dd-}). Theorem \ref{strong compare} is proved.

\end{proof}

\section{Proof of Theorem \ref{hopf}}

\begin{proof}[Proof of Theorem \ref{hopf}]

Since $\partial\Omega$ is $C^{2}$ near $\hat{x}$, there exists an open ball $B(x_{0},R)\subset\Omega$ such that $\overline{B(x_{0},R)}\cap\partial\Omega=\{\hat{x}\}$ and 
\begin{equation*}
\begin{cases}
u-v>0,\quad\mbox{in }\overline{B(x_{0},R)}\texttt{\symbol{'134}}\{\hat{x}\},\\
u(\hat{x})-v(\hat{x})=0.
\end{cases}
\end{equation*}

Let $h$ be defined as in (\ref{comparef}). We work in the domain
$$A_{\delta}:=B(\hat{x},\delta)\cap B(x_{0},R).$$
It is easy to see that 
\begin{equation*}
u-v\geq\E h,\quad\mbox{on }\partial{A_{\delta}}
\end{equation*}
for any $0<\delta<\frac{R}{2}$ and $0<\E<\min\limits_{\partial B(\hat{x},\delta)\cap \overline{B(x_{0},R)}}(u-v)$.

We claim that for $\varepsilon$ small enough, 
\begin{equation*}
u-v\geq\E h,\quad\mbox{on }\overline{A_{\delta}}.
\end{equation*}

Once the claim is proved, then we have that 
\begin{equation*}
\liminf\limits_{s\rightarrow0^{+}}\frac{(u-v)(\hat{x}+s\nu(\hat{x}))}{s}\geq\E\liminf\limits_{s\rightarrow0^{+}}\frac{h(\hat{x}+s\nu(\hat{x}))}{s}=2\alpha Re^{-\alpha R^{2}}>0.
\end{equation*}
Therefore, in order to finish the proof of Theorem \ref{hopf}, we only need to prove the above claim. Suppose the contrary, that is, 
\begin{equation*}
\zeta=\zeta(\E,\alpha,\delta):=\min\limits_{\overline{A_{\delta}}}(u-v-\E h)<0.
\end{equation*}
It follows that 
\begin{equation*}
\min\limits_{\overline{A_{\delta}}}(u-v-\E h-\zeta)=0,\quad u-v-\E h-\zeta\geq-\zeta>0~\mbox{on }\partial{A_{\delta}}.
\end{equation*}

Now we can follow the argument as in the proof of Theorem \ref{strong compare} to get a contradiction.  Theorem \ref{hopf} is proved.

\end{proof}

\noindent{\bf{\large Acknowledgments.}} Li is partially supported by NSF grant DMS-1501004. Wang is partially supported by NNSF (11701027). 


\newcommand{\noopsort}[1]{}

\end{document}